\renewcommand{\leq}{\leqslant}
\renewcommand{\geq}{\geqslant}
\newcommand{\gG}{\mathsf{G}}
\newcommand{\gB}{\mathsf{B}}
\newcommand{\R}{\mathbf{R}}
\newcommand{\C}{\mathbf{C}}
\newcommand{\K}{\mathbf{K}}
\DeclareMathOperator{\mathspan}{\mathrm{span}}
\newcommand{\scalar}[2]{\langle #1 , #2\rangle}
\newcommand{\ketbra}[2]{| #1 \rangle \langle  #2 |}
\theoremstyle{plain}
\newtheorem{theorem}{Theorem}
\newtheorem{corollary}[theorem]{Corollary}
\newtheorem{proposition}{Proposition}
\newtheorem{lemma}[theorem]{Lemma}
\theoremstyle{remark}
\newtheorem{example}{Example}
\begin{document}
\begin{abstract}
We use the geometric concept of principal angles between subspaces to compute the noncommutative distribution of an expression involving two free projections. For example, this allows to simplify a formula by
Fevrier--Mastnak--Nica--Szpojankowski about the free Bernoulli anticommutator. We also derive economically an explicit formula for the free additive convolution of Bernoulli distributions. As a byproduct, we observe the remarkable fact that the principal angles between random half-dimensional subspaces are asymptotically distributed according to the uniform measure on $[0,\pi/2]$.
\end{abstract}

\author{Guillaume Aubrun}
\title{Principal angles between random subspaces and polynomials in two free projections}
\maketitle

\section{Principal angles}

Let $\K$ be the real or complex field. For a integer $n$, we equip $\K^n$ with its usual inner product. We set $[n]=\{1,\dots,n\}$. 
For $0 \leq k \leq n$, we denote by $\gG_{n,k}$ the \emph{Grassmann manifold} defined as the set of all $k$-dimensional subspaces of $\K^n$. Given a subspace $E \subset \K^n$, we denote by $P_E$ the orthogonal projection onto $E$.

We now introduce the concept of \emph{principal angles} which play a central role in this note. Principal angles between two subspaces generalize the notion of the angle between two lines in $\K^2$. They are defined through the following proposition.

\begin{proposition} \label{prop:principal-angles}
Let $0 \leq k,l \leq n$ and consider subspaces $E \in \gG_{n,k}$ and $F \in \gG_{n,l}$. There exist
\begin{enumerate}
    \item an orthonormal basis $(e_i)_{i \in [k]}$ of $E$,
    \item an orthonormal basis $(f_j)_{j \in [l]}$ of $F$,
    \item numbers $\theta_1 \leq \theta_2 \leq \cdots \leq \theta_{\min(k,l)}$ in $[0,\pi/2]$
\end{enumerate}
such that, for every $i \in [k]$ and $j \in [l]$
\[ \scalar{e_i}{f_j}  = \begin{cases} 0 & \textnormal{ if } i \neq j \\ \cos(\theta_i) & \textnormal{ if } i=j. \end{cases} \]
Moreover, the numbers $(\theta_i)_{i \in [\min(k,l)]}$ are uniquely defined by these conditions.
\end{proposition}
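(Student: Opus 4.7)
The plan is to invoke the singular value decomposition (SVD) applied to the operator
$T \colon F \to E$, $T(x) = P_E(x)$.
The SVD furnishes orthonormal bases $(f_j)_{j \in [l]}$ of $F$ and $(e_i)_{i \in [k]}$ of $E$ together with singular values $\sigma_1 \geq \sigma_2 \geq \cdots \geq \sigma_m \geq 0$, where $m = \min(k,l)$, such that $T f_j = \sigma_j e_j$ for $j \in [m]$, and $T f_j = 0$ for $j > m$ (only possible when $l>k$). These singular values are intrinsically attached to $T$ and hence to the pair $(E,F)$.

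First I would note that $T$ is a composition of orthogonal projections, so $\|T\|\leq 1$; therefore every $\sigma_j$ lies in $[0,1]$, and I may set $\theta_j = \arccos(\sigma_j) \in [0,\pi/2]$. Since $\arccos$ is decreasing, the ordering $\sigma_1 \geq \cdots \geq \sigma_m$ translates into $\theta_1 \leq \cdots \leq \theta_m$, matching the required convention.

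Next I would verify the inner product identity. Using that $e_i \in E$ and that $P_E$ is self-adjoint,
\[ \scalar{e_i}{f_j} = \scalar{P_E e_i}{f_j} = \scalar{e_i}{P_E f_j} = \scalar{e_i}{T f_j}. \]
For $j \in [m]$ this equals $\sigma_j \scalar{e_i}{e_j} = \cos(\theta_j)\, \delta_{ij}$, which is the desired formula. For $j > m$ the right-hand side is $0$, again matching the statement. The cases $i > m$ (when $k > l$) are handled symmetrically, or directly, since then $\scalar{e_i}{f_j}$ with $j \in [l]$ involves an $e_i$ orthogonal to the range of $T$, giving zero.

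For uniqueness, the key observation is that $\cos^2(\theta_i)$ are precisely the eigenvalues (listed with multiplicity) of the self-adjoint operator $T^*T = P_F P_E P_F$ viewed as an endomorphism of $F$, which depends only on $E$ and $F$. Hence the multiset of angles $(\theta_i)_{i\in [m]}$ is determined by $E$ and $F$, and the prescribed ordering pins down the sequence. The only mild subtlety — more of a bookkeeping point than a real obstacle — is handling the asymmetry between $k$ and $l$ and the possible multiplicities among the $\sigma_j$, which would cause ambiguity in the choice of bases but not in the $\theta_i$.
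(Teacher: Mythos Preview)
Your proof is correct and follows essentially the same route the paper indicates: the paper does not give a detailed argument but remarks that Proposition~\ref{prop:principal-angles} may be proved via a singular value decomposition of $P_EP_F$, with uniqueness of the $\theta_i$ following from uniqueness of singular values. Your restriction $T\colon F\to E$, $x\mapsto P_E x$, has the same nonzero singular values as $P_EP_F$ on $\K^n$, so the two formulations are equivalent.
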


In the context of Proposition \ref{prop:principal-angles}, the numbers $(\theta_i)_{i \in [\min(k,l)]}$ are called the \emph{principal angles} between $E$ and $F$. The vectors $e_i$ and $f_j$ are sometimes called the principal vectors; they are not uniquely defined.

Principal angles are discussed in several places (see, e.g., \cite{BS,GVL}) and can be related to singular values. If $e_i$, $f_j$ and $\theta_i$ satisfy the condition of Proposition \ref{prop:principal-angles}, then
\[ P_EP_F = \left( \sum_{i \in [k]} \ketbra{e_i}{e_i} \right) \left( \sum_{j \in [l]} \ketbra{f_j}{f_j} \right) = \sum_{i \in [\min(k,l)]} \cos (\theta_i) \ketbra{e_i}{f_i} \]
is a \emph{singular value decomposition} of the operator $P_EP_F$. Conversely, one may prove Proposition \ref{prop:principal-angles} by considering a singular value decomposition of $P_EP_F$; the uniqueness of principal angles follows from the uniqueness of singular values.


We compute, on few simple examples, the spectrum of a self-adjoint expression in two orthogonal projections from the principal angles between their ranges.

\begin{proposition} \label{prop:angles-formula}
Let $E \in \gG_{n,k}$ and $F \in \gG_{n,l}$ with $k \leq l$. Let $m = \dim( E \cap F)$ and $(\theta_i)_{i \in [k-m]}$ the nonzero principal angles between $E$ and $F$. Set $P=P_E$ and $Q=P_F$. Then
 \begin{enumerate}
 \item the spectrum of $PQP$ or $QPQ$ is
  \[ \sigma(PQP) = \sigma(QPQ) = \{0_{(n-k)} \} \cup \{ \cos^2 \theta_i \} \cup \{ 1_{(m)} \},\]
\item the spectrum of $P+Q$ is
\[ \sigma(P+Q)= \{0_{(n-k-l+m)}\} \cup \{1-\cos \theta_i\} \cup \{1_{(l-k)}\} \cup\{1+\cos \theta_i\} \cup \{2_{(m)}\} ,\]
 \item the spectrum of $\imath (PQ-QP)$ is
 \[ \sigma(\imath (PQ-QP)) = \{ -\cos \theta_i \sin  \theta_i \} \cup \{0_{(n-2k+2m)} \} \cup \{ \cos \theta_i \sin  \theta_i \}, \]
 \item the spectrum of $PQ+QP$ is
 \[ \sigma(PQ+QP) = \{ \cos^2 \theta_i - \cos \theta_i \} \cup \{0_{(n-2k+m)}\} \cup \{\cos^2 \theta_i+\cos \theta_i \} \cup \{2_{(m)}\}. \]
 \end{enumerate}
In these formulas, the spectrum is counted with multiplicity, the index $i$ ranges in $[k-m]$ and the notation $\lambda_{(p)}$ stands for the eigenvalue $\lambda$ repeated $p$ times.
\end{proposition}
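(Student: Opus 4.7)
The plan is to use Proposition~\ref{prop:principal-angles} to decompose $\K^n$ as an orthogonal direct sum of subspaces that are jointly invariant under $P$ and $Q$ and have dimension at most $2$. On every such piece the four target operators reduce to $1\times 1$ or $2\times 2$ matrices whose spectra can be computed by hand; summing the contributions yields the claimed formulas.

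Let $(e_i)_{i \in [k]}$, $(f_j)_{j \in [l]}$ and $\theta_1 \leq \dots \leq \theta_k$ be as in Proposition~\ref{prop:principal-angles}. Because $\cos \theta_i = 1$ forces $e_i = f_i$, exactly $m$ of the angles vanish and $e_1 = f_1, \dots, e_m = f_m$ is an orthonormal basis of $E \cap F$. Using the relations $\langle e_i, f_j \rangle = \cos \theta_i \, \delta_{ij}$ together with a dimension count, I would establish the orthogonal decomposition
\[ \K^n = (E \cap F) \; \oplus \; \bigoplus_{i=m+1}^{k} V_i \; \oplus \; \mathrm{span}(f_{k+1}, \dots, f_l) \; \oplus \; (E+F)^\perp, \]
where $V_i := \mathrm{span}(e_i, f_i)$, and check that each summand is stable under both $P$ and $Q$.

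On $E \cap F$ we have $P = Q = \mathrm{Id}$, on $\mathrm{span}(f_{k+1}, \dots, f_l)$ we have $P = 0$ and $Q = \mathrm{Id}$, and on $(E+F)^\perp$ both vanish, so the contributions of these three summands are transparent. On each $V_i$ with $i > m$, pick the orthonormal basis $u_i = e_i$ and $v_i \in V_i$ with $f_i = \cos \theta_i \, u_i + \sin \theta_i \, v_i$; then
\[ P|_{V_i} = \begin{pmatrix} 1 & 0 \\ 0 & 0 \end{pmatrix}, \qquad Q|_{V_i} = \begin{pmatrix} \cos^2 \theta_i & \cos \theta_i \sin \theta_i \\ \cos \theta_i \sin \theta_i & \sin^2 \theta_i \end{pmatrix}. \]
The restrictions of $PQP$, $P + Q$, $\imath(PQ - QP)$ and $PQ + QP$ to $V_i$ are then $2 \times 2$ matrices that can be diagonalized in one line via trace and determinant, producing exactly the pairs of eigenvalues appearing in the statement.

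The main obstacle is really the first step — the structural decomposition. Once the mutual orthogonality of the $V_i$ (which follows from $\langle e_i, f_j \rangle = 0$ for $i \neq j$ together with $\langle e_i, e_j \rangle = \langle f_i, f_j \rangle = \delta_{ij}$) and the dimension identity $m + 2(k-m) + (l - k) + (n - k - l + m) = n$ are verified, the remainder is a routine exercise in $2 \times 2$ linear algebra; the multiplicity of the eigenvalue $0$ in each spectrum is then forced by a final dimension check.
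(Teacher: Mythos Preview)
Your argument is correct and is essentially the same as the paper's: both use Proposition~\ref{prop:principal-angles} to set up the orthogonal decomposition $\K^n = (E\cap F)\oplus\bigoplus_{i=m+1}^k \mathrm{span}(e_i,f_i)\oplus\mathrm{span}(f_{k+1},\dots,f_l)\oplus(E+F)^\perp$, observe that $P$ and $Q$ are jointly block-diagonal with the same $2\times 2$ blocks you wrote down, and then read off the eigenvalues block by block. The only cosmetic difference is that you spell out the orthogonality and dimension checks a bit more explicitly than the paper does.
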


More generally, the spectrum of any self-adjoint polynomial in $P_E$, $P_F$ depends only on the principal angles between $E$ and $F$.

\begin{proof}
Let $(e_i)_{i \in [k]}$ and $(f_j)_{j \in [l]}$ be respective orthonormal bases of $E$ and $F$ satisfying the conclusion of Proposition \ref{prop:principal-angles}. We have $e_i=f_i$ for $i \in [m]$. Consider the orthogonal direct sum
\[ \K^n = (E \cap F) \bigoplus \left( \bigoplus_{i=m+1}^k \mathspan(e_i,f_i) \right) \bigoplus \left( \bigoplus_{j=k+1}^l \mathspan (f_j) \right) \bigoplus (E+F)^\perp.\]
The operators $P$ and $Q$ are jointly block-diagonalizable with respect to this decomposition:
\begin{itemize}
    \item the $m$-dimensional subspace $E \cap F$ is a eigenspace for $P$ and $Q$, with eigenvalue $1$,
\item for $m+1 \leq i \leq k$, the $2$-dimensional subspace $\mathspan \{e_i,f_i\}$ is stable for both $P$ and $Q$, which act respectively as the matrices
    \begin{equation} \label{eq:joint-space} \begin{pmatrix} 1 & 0 \\ 0 & 0 \end{pmatrix} \ \  \textnormal{ and } \ \ \begin{pmatrix} \cos^2 \theta_i & \cos \theta_i \sin \theta_i \\ \cos \theta_i \sin \theta_i & \sin^2 \theta_i \end{pmatrix} \end{equation}
    in the orthonormal basis $(e_i,g_i)$, where $g_i$ is defined by the formula $f_i=\cos(\theta_i) e_i + \sin(\theta_i)g_i$,
    \item for $k+1 \leq j \leq l$, the vector $f_j$ is a eigenvector for both $P$ (with eigenvalue $0$) and $Q$ (with eigenvalue $1$),
        \item the $(n-k-l+m)$-dimensional subspace $(E+F)^\perp$ is a eigenspace for $P$ and $Q$, with eigenvalue $0$.
\end{itemize}
Each result follows; the formulas involving $\theta_i$ are obtained by computing the spectrum of the corresponding polynomial in the $2 \times 2$ matrices appearing in \eqref{eq:joint-space}.
\end{proof}

For every integer $0 \leq k \leq n$, the Grassmann manifold $\gG_{n,k}$ is equipped with a unique rotation-invariant probability measure, which we call the Haar measure. A concrete way to choose a Haar distributed random element $E \in \gG_{n,k}$ is to realize $E$ as the linear span of $k$ independent standard Gaussian vectors in $\K^n$. The following lemma is well known.

\begin{lemma} \label{lemma:generic}
Consider integers $0 \leq k,l \leq n$. Let $E \in \gG_{n,k}$ and $F \in \gG_{n,l}$ be independent Haar distributed subspaces. The following holds almost surely:
\[ \dim (E + F) = \min(k+l,n), \ \ \ \dim (E \cap F) = \max(k+l-n,0). \]
Moreover, the number of nonzero principal angles between $E$ and $F$ is almost surely equal to 
$\min(k,l,n-k,n-l)$.
\end{lemma}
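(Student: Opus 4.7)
The plan is to realize the two Haar-random subspaces as Gaussian spans and reduce all three claims to a single standard genericity fact about random matrices.

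First I would write $E = \mathspan(X_1, \ldots, X_k)$ and $F = \mathspan(Y_1, \ldots, Y_l)$, where $X_1, \ldots, X_k, Y_1, \ldots, Y_l$ are independent standard Gaussian vectors in $\K^n$. Let $M$ be the $n \times (k+l)$ matrix having these vectors as columns, so that $E+F$ is the column space of $M$ and $\dim(E+F) = \rank(M)$. The key observation is then that $\rank(M) = \min(n, k+l)$ almost surely: the locus of matrices of strictly smaller rank is a proper algebraic subvariety of $\K^{n(k+l)}$, so it has Lebesgue measure zero, and the joint law of the entries of $M$ is absolutely continuous with respect to Lebesgue measure. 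This yields the first dimension identity, and the second follows from the Grassmann formula $\dim(E \cap F) = \dim(E) + \dim(F) - \dim(E+F)$.

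For the third claim, I would use that Proposition \ref{prop:principal-angles} produces $\min(k,l)$ principal angles, and that (as already exploited in the proof of Proposition \ref{prop:angles-formula}) the principal vectors corresponding to $\theta_i = 0$ form an orthonormal basis of $E \cap F$; hence the number of zero principal angles equals $\dim(E \cap F)$. Combining with the almost sure value $\dim(E \cap F) = \max(k+l-n, 0)$, the number of nonzero principal angles equals
\[ \min(k,l) - \max(k+l-n, 0), \]
and a short case analysis on the sign of $k+l-n$ identifies this with $\min(k, l, n-k, n-l)$.

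The only substantive step is the generic rank assertion for a Gaussian matrix, which is entirely standard; once the Gaussian realization is set up, no real obstacle remains.
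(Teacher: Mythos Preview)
Your proof is correct and follows essentially the same approach as the paper's: both realize $E$ and $F$ via Gaussian vectors to get $\dim(E+F)$, and both count nonzero principal angles as $\min(k,l)-\dim(E\cap F)$. The only minor variation is in the second step: you deduce $\dim(E\cap F)$ directly from the Grassmann formula, whereas the paper writes $E\cap F=(E^\perp+F^\perp)^\perp$ and reapplies the first assertion to the Haar-distributed complements $E^\perp$, $F^\perp$; both are one-line linear-algebra identities.
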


\begin{proof}
The first assertion is clear if we generate $E$, $F$ via Gaussian vectors. The second can then be deduced by writing $E \cap F$ as $(E^\perp + F^\perp)^\perp$ and using the fact that $E^\perp \in \gG_{n,n-k}$ and $F^\perp \in \gG_{n,n-l}$ are also independent and Haar distributed. The last point follows since the number of nonzero principal angles between $E$ and $F$ is $\min(k,l) - \dim(E \cap F)$.
\end{proof}

In this paper, we derive the limit distribution of principal angles between random subspaces using the well known connection to free probability. This question does not seem to have been discussed in the literature; we could only locate the paper \cite{AEK} which deals with the largest principal angle only. 


\section{Free probability}

We introduce very briefly some background from free probability needed for our purposes, and refer to classical references such as \cite{MS,NS,VDN} for more detail.

A $*$-probability space is a couple $(\mathcal{A},\varphi)$, where $\mathcal{A}$ is a unital complex $*$-algebra and $\varphi : \mathcal{A} \to \C$ is a linear form which is positive (i.e., $\varphi(a^*a) \geq 0$ for every $a \in \mathcal{A}$) and satisfies $\varphi(1_\mathcal{A})=1$. Given a self-adjoint element $a \in \mathcal{A}$ and a compactly supported probability measure $\mu$, we say that $\mu$ is the \emph{distribution} of $a$ if
\[ \int_\R x^k \, \mathrm{d}\mu (x) = \varphi(a^k) \]
for every integer $k \geq 0$. 

If $p \in \mathcal{A}$ is a self-adjoint projection and $\alpha = \varphi(p)$, then the distribution of $p$ is $\gB(\alpha) \coloneqq \alpha \delta_1 + (1-\alpha) \delta_0$, the Bernoulli distribution with parameter $\alpha$.

If $A$ is a self-adjoint operator on $\K^n$ with eigenvalues $\lambda_1,\dots,\lambda_n$, its \emph{empirical spectral distribution} is defined as
\[ \mu_{\mathrm{sp}}(A) = \frac{1}{n} \sum_{i=1}^n \delta_{\lambda_i}. \]
If $A$ is an orthogonal projection of rank $r$, then $\mu_{\mathrm{sp}}(A) = \gB(r/n)$.

We do not repeat here the definition of the fundamental concept of \emph{free independence} (see \cite[Chapter 5]{NS}). We rely crucially on the asymptotic freeness of  independent large-dimensional random matrices. What we need is summarized by the following proposition, which is a special case of \cite[Theorem 23.14]{NS}.

\begin{proposition} \label{prop:asymptotic-freeness}
Fix $\alpha, \beta \in [0,1]$,  and for every $n$, integers $0 \leq  k_n, l_n \leq n$ such that $\lim k_n/n = \alpha$ and $\lim l_n/n=\beta$. Suppose that
\begin{enumerate}
\item for every $n$, $E_n \in \gG_{n, k_n}$ and $F_n \in \gG_{n,l_n}$ are independent Haar distributed random subspaces,
\item $p$ and $q$ are free self-adjoint projections in a $*$-probability space, with respective distributions $\gB(\alpha)$ and $\gB(\beta)$.
\end{enumerate}
Then, for every self-adjoint polynomial in two non-commuting variables $\pi$, the sequence of probability measures
\[ \mu_{\mathrm{sp}} (\pi(P_{E_n},P_{F_n})) \]
converges towards the distribution of $\pi(p,q)$.
\end{proposition}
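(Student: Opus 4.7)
The plan is to reduce to the classical asymptotic freeness of deterministic matrices under Haar-unitary conjugation, which is precisely what the cited Nica--Speicher theorem supplies. Since $\pi(P_{E_n},P_{F_n})$ has operator norm bounded by a constant $C(\pi)$ independent of $n$, the spectral distributions under consideration are all supported in a common compact interval; by the method of moments together with a standard tightness argument, it suffices to show that for every integer $k \geq 0$,
\[ \frac{1}{n}\tr \pi(P_{E_n},P_{F_n})^k \; \longrightarrow \; \varphi(\pi(p,q)^k) \]
in probability as $n \to \infty$.

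Next, I would use the rotation-invariance of the Haar measure on the Grassmannian to represent $P_{E_n} = U_n A_n U_n^*$ and $P_{F_n} = V_n B_n V_n^*$, where $A_n$ and $B_n$ are the fixed coordinate projections of ranks $k_n$ and $l_n$, and $U_n, V_n$ are independent Haar-distributed elements of $\gU(n)$ (or $\gO(n)$ in the real case). Conjugation by $U_n^*$ preserves all normalized traces, so setting $W_n = U_n^* V_n$---which is itself Haar-distributed by left-invariance of Haar measure---I may replace the pair $(P_{E_n}, P_{F_n})$ by the pair $(A_n, W_n B_n W_n^*)$ without affecting any moment appearing above.

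At this point $A_n, B_n$ are deterministic with $\mu_{\mathrm{sp}}(A_n) = \gB(k_n/n) \to \gB(\alpha)$ and $\mu_{\mathrm{sp}}(B_n) = \gB(l_n/n) \to \gB(\beta)$, and Voiculescu's asymptotic freeness theorem \cite[Theorem 23.14]{NS} guarantees that $(A_n, W_n B_n W_n^*)$ converges in non-commutative $*$-distribution to a free pair with the prescribed Bernoulli marginals, i.e., exactly to $(p,q)$. Combined with the first paragraph, this gives the announced weak convergence of $\mu_{\mathrm{sp}}(\pi(P_{E_n}, P_{F_n}))$ to the distribution of $\pi(p,q)$.

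The entire non-trivial content is absorbed into the cited theorem, whose proof rests on the Weingarten calculus for Haar-distributed unitary (or orthogonal) matrices; this is the only real obstacle, and once it is granted as a black box, the reduction above uses nothing beyond the invariance properties of the Haar measure and moment convergence for compactly supported distributions.
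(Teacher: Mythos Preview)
Your proposal is correct and is in fact more than the paper itself supplies: the paper does not prove this proposition at all but simply declares it to be a special case of \cite[Theorem~23.14]{NS}. Your argument spells out the standard reduction---representing $P_{E_n}$ and $P_{F_n}$ as Haar-unitary conjugates of fixed coordinate projections, absorbing one unitary into the other via trace invariance, and then invoking the cited asymptotic freeness theorem---which is exactly how one sees that the cited result applies here; so you and the paper are taking the same route, with you making the passage explicit.
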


In this paper, the convergence of a sequence of random measures is always meant to be the weak convergence in probability.

\section{Polynomials in two free projections}

Throughout this section, we consider $p$ and $q$ to be free projections in a $*$-probability space, with distributions $\gB(\alpha)$ and $\gB(\beta)$ respectively. 

By Proposition \ref{prop:asymptotic-freeness}, the distribution of a self-adjoint polynomial in $p$, $q$ is related to the distribution of principal angles between random subspaces. In order to find the later, we consider the polynomial $pqp$. The distribution of $pqp$ is the \emph{free multiplicative convolution} of $\gB(\alpha)$ and $\gB(\beta)$ and is denoted by $\gB(\alpha) \boxtimes \gB(\beta)$. We take advantage of the fact that an explicit formula appears in the literature (see \cite[Example 3.6.7]{VDN}) 
\begin{equation} \label{eq:boxtimes} \gB(\alpha) \boxtimes \gB(\beta) = (1-\min(\alpha,\beta)) \delta_0 + \max(\alpha+\beta-1,0) \delta_1 + \mu\end{equation}
where $\mu$ is an absolutely continuous measure with density $f$ supported on $[\phi_-,\phi_+]$, with $\phi_\pm = \alpha + \beta -  2\alpha\beta \pm 2 \sqrt{\alpha\beta(1-\alpha)(1-\beta)}$, given by
\[ f(x) = \frac{\sqrt{(\phi_+-x)(x-\phi_-)}}{2\pi x(1-x)} 
.\]
The total mass of $\mu$ is $\min(\alpha,\beta,1-\alpha,1-\beta)$. In the special case $\alpha=\beta=1/2$, we have $\phi_-=0$, $\phi_+=1$ and $2\mu$ is the arcsine distribution.

We can now derive the limit distribution for principal angles between random large-dimensional subspaces. 

\begin{theorem} \label{theo:limit-principal-angles}
Fix $\alpha, \beta \in [0,1]$,  and for every $n$, integer $0 \leq k_n,l_n \leq n$ such that $\lim k_n/n = \alpha$ and $\lim l_n/n=\beta$. Set $r_n=\min(k_n,l_n,n-k_n,n-l_n)$. For each $n$, let $E_n \in \gG_{n,k_n}$, $F \in \gG_{n,l_n}$ be independent Haar-distributed random subspaces and let $(\theta^n_i)_{i \in [r_n]}$ be the nonzero principal angles between $E_n$ and $F_n$. 

As $n \to \infty$, the empirical distribution $\frac{1}{n} \sum_{i \in [r_n]} \delta_{\theta^n_i}$ converges  towards the distribution supported on $[ \arccos \sqrt{\phi_+}, \arccos \sqrt{\phi_-}]$
with density
\[ s(\theta) = \frac{\sqrt{(\phi_+ - \cos^2 \theta)(\cos^2 \theta - \phi_-)}}{{\pi \sin \theta \cos \theta}} .\]
The total mass of this distribution equals $\min(\alpha,\beta,1-\alpha,1-\beta)$.
\end{theorem}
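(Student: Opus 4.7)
The plan is to reduce the problem to the already-known distribution of $pqp$ recorded in \eqref{eq:boxtimes}, using the spectral decomposition from Proposition \ref{prop:angles-formula}(1). By the symmetry of the statement, I may assume $k_n \leq l_n$ (passing to subsequences otherwise). Set $P_n = P_{E_n}$, $Q_n = P_{F_n}$, and $m_n = \max(k_n+l_n-n,0)$. By Lemma \ref{lemma:generic}, almost surely $\dim(E_n \cap F_n) = m_n$ and $r_n = k_n - m_n$, so Proposition \ref{prop:angles-formula}(1) yields the decomposition
\[ \mu_{\mathrm{sp}}(P_n Q_n P_n) = \frac{n-k_n}{n}\,\delta_0 \; + \; \frac{m_n}{n}\,\delta_1 \; + \; \frac{1}{n} \sum_{i=1}^{r_n} \delta_{\cos^2 \theta^n_i}. \]

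Applying Proposition \ref{prop:asymptotic-freeness} to the self-adjoint polynomial $\pi(x,y) = xyx$, the left-hand side converges weakly in probability to the distribution of $pqp$, which by \eqref{eq:boxtimes} decomposes as
\[ \gB(\alpha) \boxtimes \gB(\beta) = (1-\alpha)\,\delta_0 \; + \; \max(\alpha+\beta-1,0)\,\delta_1 \; + \; \mu. \]
Since $(n-k_n)/n \to 1-\alpha = 1-\min(\alpha,\beta)$ and $m_n/n \to \max(\alpha+\beta-1,0)$ match the two atomic weights of the limit, I can subtract those deterministic atoms to conclude that $\frac{1}{n}\sum_{i=1}^{r_n} \delta_{\cos^2 \theta^n_i}$ converges weakly in probability to the absolutely continuous part $\mu$ with density $f$.

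To translate this into a statement about the angles themselves, I push the measure forward through the decreasing $C^1$-bijection $x \mapsto \arccos\sqrt{x}$ from $[\phi_-,\phi_+]$ onto $[\arccos\sqrt{\phi_+}, \arccos\sqrt{\phi_-}]$. The change of variables $x = \cos^2\theta$ gives $|dx/d\theta| = 2\sin\theta\cos\theta$, which turns $f(x) = \sqrt{(\phi_+-x)(x-\phi_-)}/[2\pi x(1-x)]$ into exactly the claimed density $s(\theta)$; the total mass is preserved and equals the mass of $\mu$, namely $\min(\alpha,\beta,1-\alpha,1-\beta)$, in agreement with $r_n/n \to \min(\alpha,\beta,1-\alpha,1-\beta)$ from Lemma \ref{lemma:generic}.

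I do not expect any substantial obstacle: the proof is a direct combination of asymptotic freeness, the spectral decomposition in Proposition \ref{prop:angles-formula}(1), and the explicit multiplicative convolution \eqref{eq:boxtimes}. The only mild subtlety is verifying that separating off the atoms at $0$ and $1$ is legitimate under weak convergence in probability; this is automatic because the weights $(n-k_n)/n$ and $m_n/n$ are deterministic and converge to precisely the masses of those atoms in the limit measure, so no other mass can leak to or from $\{0,1\}$.
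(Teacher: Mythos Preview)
Your proposal is correct and follows essentially the same route as the paper's proof: both use Proposition~\ref{prop:angles-formula}(1) to express $\mu_{\mathrm{sp}}(P_{E_n}P_{F_n}P_{E_n})$ in terms of the $\cos^2\theta_i^n$, invoke Proposition~\ref{prop:asymptotic-freeness} to match this against the explicit formula~\eqref{eq:boxtimes}, strip off the atoms, and then push forward by $x\mapsto\arccos\sqrt{x}$ via the chain rule. Your added remarks on the legitimacy of subtracting the deterministic atoms and on the total mass are sound and slightly more explicit than the paper's version.
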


\begin{proof}
By Lemma \ref{lemma:generic}, the number of nonzero principal angles between $E_n$ and $F_n$ is almost surely equal to $r_n$, so the random variables $(\theta^n_i)_{i \in [r_n]}$ are well-defined.
By Proposition \ref{prop:asymptotic-freeness}, the sequence $\mu_{\mathrm{sp}}(P_{E_n}P_{F_n}P_{E_n})$ converges towards $\gB(\alpha) \boxtimes \gB(\beta)$. On the other hand, we know from Proposition \ref{prop:angles-formula} that
\[ \mu_{\mathrm{sp}}(P_{E_n}P_{F_n}P_{E_n}) = \frac{n-\max(k_n,l_n)}{n}\delta_0 + \frac{\max (k_n+l_n-n,0)}{n}\delta_1 +  \frac{1}{n} \sum_{i \in [r_n]} \delta_{\cos^2 \theta_i^n}.\]
Comparing with \eqref{eq:boxtimes}, we conclude that the sequence $\frac{1}{n} \sum_{i \in [r_n]} \delta_{\cos^2 \theta_i^n}$ converges towards $\mu$, and therefore that $\frac{1}{n} \sum_{i \in [r_n]} \delta_{\theta_i^n}$ converges towards $\varphi_*\mu$, the pushforward of $\mu$ under the map $\varphi : x\mapsto \arccos \sqrt{x}$.  By the chain rule, its density of $\varphi_*\mu$ is $(f \circ \varphi^{-1})|(\varphi^{-1})'|$ and the result follows.
\end{proof}

In the special case $\alpha = \beta = 1/2$, i.e., when the involved Bernoulli distributions are fair, the situation remarkably simple. If $E$, $F$ are random lines in $\K^2$, their angle obviously follows the uniform distribution in $[0,\pi/2]$. (The analogous statement fails in higher dimension.) Surprisingly, a similar phenomenon appears at the limit.

\begin{corollary} \label{cor:principal-angles-uniform}
For every $n$, let $E_n, F_n \in \gG_{2n,n}$ be independent Haar-distributed random subspaces of dimension $n$ in $\K^{2n}$, and $(\theta_i^n)_{i \in [n]}$ the principal angles of the pair $(E_n,F_n)$. As $n \to \infty$, the empirical distribution $\frac{1}{n} \sum \delta_{\theta_i^n}$ converges towards the uniform distribution on $[0,\pi/2]$.
\end{corollary}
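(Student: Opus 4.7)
The plan is to specialize Theorem \ref{theo:limit-principal-angles} to the symmetric case $\alpha = \beta = 1/2$ and observe that the density formula collapses to a constant.

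First I would compute the endpoints $\phi_\pm$ at $\alpha=\beta=1/2$: the defining formula gives
\[ \phi_\pm = \tfrac12 + \tfrac12 - 2\cdot \tfrac14 \pm 2\sqrt{\tfrac14\cdot\tfrac14} = \tfrac12 \pm \tfrac12, \]
so $\phi_- = 0$ and $\phi_+ = 1$, and the support $[\arccos\sqrt{\phi_+},\arccos\sqrt{\phi_-}]$ becomes $[0,\pi/2]$. Plugging into the density $s(\theta)$ from Theorem \ref{theo:limit-principal-angles}, the numerator simplifies as
\[ \sqrt{(1-\cos^2\theta)(\cos^2\theta-0)} = \sin\theta\cos\theta \qquad (\theta\in[0,\pi/2]), \]
which cancels the denominator $\pi\sin\theta\cos\theta$ exactly, leaving $s(\theta) = 1/\pi$ on $[0,\pi/2]$.

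Next I would reconcile the normalizations. The corollary works with ambient dimension $2n$ and $k_n=l_n=n$, so $r_{2n}=n$, but normalizes the empirical measure by $\frac{1}{n}$ rather than $\frac{1}{2n}$ as in the theorem. Applying Theorem \ref{theo:limit-principal-angles} with the $2n$ playing the role of the ambient dimension yields
\[ \frac{1}{2n}\sum_{i=1}^n \delta_{\theta_i^n} \longrightarrow \frac{1}{\pi}\mathbf{1}_{[0,\pi/2]}(\theta)\, \mathrm d\theta, \]
a measure of total mass $1/2$, consistent with $\min(\alpha,\beta,1-\alpha,1-\beta) = 1/2$. Multiplying by $2$, the empirical measure $\frac{1}{n}\sum_{i=1}^n \delta_{\theta_i^n}$ converges to the probability measure with density $2/\pi$ on $[0,\pi/2]$, i.e., the uniform distribution on $[0,\pi/2]$.

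There is no real obstacle here: the statement follows by direct specialization of Theorem \ref{theo:limit-principal-angles} and a careful bookkeeping of the normalization factor induced by the doubled ambient dimension.
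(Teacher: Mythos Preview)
Your proposal is correct and follows exactly the paper's implicit approach: the corollary is obtained by specializing Theorem~\ref{theo:limit-principal-angles} to $\alpha=\beta=1/2$, where $\phi_-=0$, $\phi_+=1$ and the density collapses to a constant. Your explicit handling of the normalization (ambient dimension $2n$ versus the theorem's $n$, yielding density $2/\pi$ after rescaling) is a welcome detail that the paper leaves to the reader.
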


 We could not locate Corollary \ref{cor:principal-angles-uniform} in the literature. It would be interesting to give a direct proof of this limit theorem, given the very simple form of the limit distribution.

We can now revert our proof strategy and compute via principal angles the distribution of any self-adjoint polynomial in $p, q$. A basic case, the distribution of $p+q$, is called the \emph{free additive convolution} of $\gB(\alpha)$ and $\gB(\beta)$ and is denoted by $\gB(\alpha) \boxplus \gB(\beta)$. Although technologies to compute free additive convolutions are available (such as the $R$-transform or Boolean cumulants), their implementation is not so obvious. We could not locate the computation of $\gB(\alpha) \boxplus \gB(\beta)$ in the literature (its Cauchy transform appears in \cite[Section 4.3]{SpeicherRao} as the solution to a $4$th degree equation, but the inversion step to write explicitly the density is nontrivial). While such a computation is doable by standard methods, we believe our derivation from Theorem \ref{theo:limit-principal-angles} to be more economical.

\begin{theorem} \label{theo:boxplus-bernoulli}
For $\alpha$, $\beta$ in $[0,1]$, define
\begin{align*} 
\gamma_1 = 1 - \sqrt{\beta(1-\alpha)} - \sqrt{\alpha(1-\beta)} \\ 
\gamma_2 = 1 - \sqrt{\beta(1-\alpha)} + \sqrt{\alpha(1-\beta)} \\ 
\gamma_3 = 1 + \sqrt{\beta(1-\alpha)} - \sqrt{\alpha(1-\beta)} \\ 
\gamma_4 = 1 + \sqrt{\beta(1-\alpha)} + \sqrt{\alpha(1-\beta)}
\end{align*}
The free additive convolution of Bernoulli distributions is given by
\[ \gB(\alpha) \boxplus \gB(\beta) = \max(1-\alpha-\beta,0) \delta_0 
+ |\alpha-\beta| \delta_1 + \max(\alpha+\beta-1,0) \delta_2 + \nu\]
where $\nu$ is the absolutely continuous measure supported on 
\[ \left[\gamma_1,\min(\gamma_2,\gamma_3)\right] \cup \left[\max(\gamma_2,\gamma_3),\gamma_4\right] \]
with density given by
\[ g(t) = \frac{\sqrt{-
(t-\gamma_1)(t-\gamma_2)(t-\gamma_3)(t-\gamma_4)
}}{{\pi t (2-t)|t-1|}}. \]
The total mass of $\nu$ equals $2\min(\alpha,\beta,1-\alpha,1-\beta)$.
\end{theorem}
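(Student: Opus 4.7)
The plan is to reverse the strategy of Theorem \ref{theo:limit-principal-angles}: rather than reading off the principal angles from a known spectrum of $pqp$, I would read off the spectrum of $p+q$ from the now-known distribution of principal angles. Concretely, by Proposition \ref{prop:asymptotic-freeness} the empirical spectral distribution of $P_{E_n}+P_{F_n}$ converges to $\gB(\alpha)\boxplus\gB(\beta)$. Taking (WLOG) $k_n\leq l_n$, Proposition \ref{prop:angles-formula}(2) together with Lemma \ref{lemma:generic} shows that this spectrum consists almost surely of the atoms $0^{(n-k_n-l_n+m_n)}$, $1^{(l_n-k_n)}$, $2^{(m_n)}$ with $m_n=\max(k_n+l_n-n,0)$, together with the pairs $\{1-\cos\theta_i^n,\,1+\cos\theta_i^n\}$ for each of the $r_n$ nonzero principal angles. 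Dividing by $n$ and letting $n\to\infty$, the atomic contributions are exactly $\max(1-\alpha-\beta,0)\,\delta_0+|\alpha-\beta|\,\delta_1+\max(\alpha+\beta-1,0)\,\delta_2$, while the continuous part $\nu$ is the push-forward of the principal-angle density $s$ from Theorem \ref{theo:limit-principal-angles} under the two maps $\theta\mapsto 1-\cos\theta$ and $\theta\mapsto 1+\cos\theta$, whose images are $[1-\sqrt{\phi_+},1-\sqrt{\phi_-}]$ and $[1+\sqrt{\phi_-},1+\sqrt{\phi_+}]$.

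Setting $t=1\pm\cos\theta$, one has $\cos\theta=|1-t|$ and $\sin^2\theta=t(2-t)$, so the push-forward density at $t$ is
\[
g(t)\;=\;\frac{s(\theta)}{\sin\theta}\;=\;\frac{\sqrt{(\phi_+-(1-t)^2)\,((1-t)^2-\phi_-)}}{\pi\, t(2-t)\,|1-t|}.
\]
The two intervals in the image are disjoint, so there is no doubling factor, and the total mass of $\nu$ is twice that of $s$, namely $2\min(\alpha,\beta,1-\alpha,1-\beta)$, as claimed.

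It remains to identify this form of $g$ with the one in the statement. From the identities $\phi_+=(\sqrt{\alpha(1-\beta)}+\sqrt{\beta(1-\alpha)})^2$ and $\phi_-=(\sqrt{\alpha(1-\beta)}-\sqrt{\beta(1-\alpha)})^2$ one reads off $\{\gamma_1,\gamma_4\}=\{1\mp\sqrt{\phi_+}\}$ and $\{\gamma_2,\gamma_3\}=\{1\mp\sqrt{\phi_-}\}$, which matches the stated support $[\gamma_1,\min(\gamma_2,\gamma_3)]\cup[\max(\gamma_2,\gamma_3),\gamma_4]$. The symmetry $\gamma_1+\gamma_4=\gamma_2+\gamma_3=2$ then yields the factorizations $(t-\gamma_1)(t-\gamma_4)=(t-1)^2-\phi_+$ and $(t-\gamma_2)(t-\gamma_3)=(t-1)^2-\phi_-$, from which $-(t-\gamma_1)(t-\gamma_2)(t-\gamma_3)(t-\gamma_4)=(\phi_+-(1-t)^2)((1-t)^2-\phi_-)$, exactly matching the radicand of $g$.

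There is no deep obstacle in this scheme; the only bookkeeping subtlety is that whether $1-\sqrt{\phi_-}$ equals $\gamma_2$ or $\gamma_3$ depends on the sign of $\sqrt{\alpha(1-\beta)}-\sqrt{\beta(1-\alpha)}$, which is absorbed by the $\min/\max$ in the statement (and is consistent with the $\alpha\leftrightarrow\beta$ symmetry of $\gB(\alpha)\boxplus\gB(\beta)$). The algebraic identification of the radicand is the one place requiring a short computation, but it is trivial once one observes that the four roots $\gamma_i$ pair up symmetrically about $1$.
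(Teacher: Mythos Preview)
Your proposal is correct and follows essentially the same route as the paper's proof: invoke Proposition~\ref{prop:asymptotic-freeness} to identify $\gB(\alpha)\boxplus\gB(\beta)$ as the limit of $\mu_{\mathrm{sp}}(P_{E_n}+P_{F_n})$, use Proposition~\ref{prop:angles-formula}(2) with Lemma~\ref{lemma:generic} to separate the atoms from the continuous part, and then push forward the principal-angle density $s$ under $\theta\mapsto 1\pm\cos\theta$ (equivalently, the paper's $\varphi(t)=\arccos|1-t|$). Your write-up is in fact more explicit than the paper's, which leaves both the pushforward computation and the algebraic identification $-(t-\gamma_1)(t-\gamma_2)(t-\gamma_3)(t-\gamma_4)=(\phi_+-(1-t)^2)((1-t)^2-\phi_-)$ to the reader.
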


In the special case $\alpha=\beta=1/2$, we recover the well known fact that $\nu$ is the arcsine distribution supported on $[0,2]$.

\begin{proof}
We use the same notation as in Theorem \ref{theo:limit-principal-angles}. By Proposition \ref{prop:asymptotic-freeness}, $\gB(\alpha) \boxplus \gB(\beta)$ is the limit of the sequence $\mu_{\mathrm{sp}}(P_{E_n}+P_{F_n})$. On the other hand, we know from Proposition \ref{prop:angles-formula} that
\begin{align*} \mu_{\mathrm{sp}}(P_{E_n} + P_{F_n}) \ = & \ \frac{\max(n-k_n-l_n,0)}{n} \delta_0 + \frac{|k_n-l_n|}{n} \delta_1 + \frac{\max(k_n+l_n-n,0)}{n} \delta_2 \\
&+ \frac{1}{n} \sum_{i \in [r_n]} \delta_{1 - \cos \theta_i^n} +  \delta_{1 + \cos \theta_i^n}.
\end{align*}
Assume that $\alpha \leq \beta$ without loss of generality, so that 
$\gamma_1 = 1 -\sqrt{\phi^+}$, $\gamma_2 = 1 -\sqrt{\phi^-} \leq \gamma_3 = 1+ \sqrt{\phi^-}$ and $\gamma_4 = 1 +\sqrt{\phi^+}$. On both $[\gamma_1,\gamma_2]$ and $[\gamma_3,\gamma_4]$, the density $g$ is the given by pushforward as $(s \circ \varphi)|\varphi'|$, where $\varphi(t)=\arccos|1-t|$. The result follows.
\end{proof}

In principle, this approach can be used to compute the distribution of a general self-adjoint polynomial in two free projections as the pushforward of the measure described in Theorem \ref{theo:limit-principal-angles}. 
We give three examples below.

\begin{example}[Commutator of free projections]
We consider the polynomial $\imath(pq-qp)$, where the factor $\imath$ is introduced to make the operator self-adjoint. An immediate adaptation of the proof of Theorem \ref{theo:boxplus-bernoulli} gives that the distribution of $\imath(pq-qp)$ equals
\[\max(|2\alpha-1|,|2\beta-1|) \delta_0 + \chi^+_*\mu + \chi^-_*\mu,\]
where the last terms are the pushforward of the measure $\mu$ defined in \eqref{eq:boxtimes} by the maps $\chi_\pm(t) = \pm \sqrt{t(1-t)}$.
This result has already been obtained in \cite[p.559--560]{NSDuke}. 
We point that in the case $\alpha = \beta =1/2$, the distribution of $\imath(pq-qp)$ is the arcsine distribution supported on $[-1,1]$.
\end{example}

\begin{example}[Anticommutator of free projections]
The free anticommutator $pq+qp$ has attracted some attention in the recent years \cite{FMNS}. While one may repeat the argument given in the proof of Theorem \ref{theo:boxplus-bernoulli}, it is actually simpler to observe that $pq+qp$ can be written as $(p+q)^2-(p+q)$. It follows that its distribution is the pushforward of $\gB(\alpha) \boxplus \gB(\beta)$ under the map $t \mapsto t^2-t$. 

We detail now the computations in the special case $\alpha=\beta=1/2$. The distribution of $\gB(1/2) \boxplus \gB(1/2)$ has density
\[ h(t) = \frac{1}{\pi \sqrt{x(2-x)}}. \]
The map $t \mapsto t^2-t$ is a bijection from $[0,1/2]$ to $[-1/4,0]$ with inverse map $\psi_-(x) = \frac{1 - \sqrt{1-4x}}{2}$, and also from $[1/2,2]$ to $[-1/4,2]$ with inverse map $\psi_+(x) = \frac{1 + \sqrt{1-4x}}{2}$. Using the chain rule, we obtain the density for $pq+qp$ as
\[ u= (h\circ \psi_-)|\psi_-'| {\bf 1}_{[-1/4,2]} + (h \circ \psi_+)|\psi_+'| {\bf 1}_{[0,2]} ,\]
which can be written explicitly as
\[ u(x) = \begin{cases} \frac{\sqrt{2}}{\pi \sqrt{1-4x} \sqrt{1+2x-\sqrt{1-4x}}} & \textnormal{ if } -\frac{1}{4} \leq x \leq 0, \\ 
\frac{\sqrt{2}}{\pi \sqrt{1-4x}} \left(\frac{1}{\sqrt{1+2x-\sqrt{1-4x}}}+\frac{1}{\sqrt{1+2x+\sqrt{1-4x}}} \right) & \textnormal{ if } 0 \leq x \leq 2. \end{cases} \]
This formula is much simpler than the one from  which has been obtained in \cite[Proposition 6.11]{FMNS}. \end{example}

\begin{example}
Our last example is the more involved polynomial $p+qpq$, for which the usual free probability techniques seem unfitting. To obtain reasonable formulas, we again restrict to the case where $p$ and $q$ are free projections with distribution $\gB(1/2)$. We first compute the eigenvalues of $A+B AB$, where $A$ and $B$ are $1$-dimensional projections with angle $\theta$ between their ranges, to be
\[ \frac{1 + \cos^2 \theta \pm \sqrt{5 \cos^4 \theta - 2 \cos^2 \theta + 1}}{2} .\]
Denote this quantity by $\rho_\pm(\cos^2\theta)$.
We may describe the distribution of $p+qpq$ as the sum of pushforwards of $\gB(1/2) \boxtimes \gB(1/2)$ (i.e., the arcsine distribution) under $\rho_+$ and under $\rho_-$. After routine computations, we obtain for $p+qpq$ a distribution supported on $[0,1/5] \cup [1,2]$ and with density
\[ x \mapsto \begin{cases} 
  \frac{1}{2\pi \zeta(x) \sqrt{2x}}\left( \frac{3-5x +\zeta(x)}{\sqrt{3-3x+\zeta(x)}} +  
  \frac{3-5x -\zeta(x)}{\sqrt{3-3x-\zeta(x)}}   \right)
& \textnormal{ if } 0 < x < \frac 15\\ \frac{5x-3-\zeta(x)}{2\pi \zeta(x) \sqrt{2x}\sqrt{3-3x+\zeta(x)}}& \textnormal{ if } 1 < x \leq 2, \end{cases}\]
where $\zeta(x)=\sqrt{5x^2-6x+1}$. On Figure~\ref{figure1} we compare this limit distribution with its approximation by two half-rank projections in $\R^{2000}.$
\begin{figure}[htpb]
\centerline{\includegraphics[scale=.35]{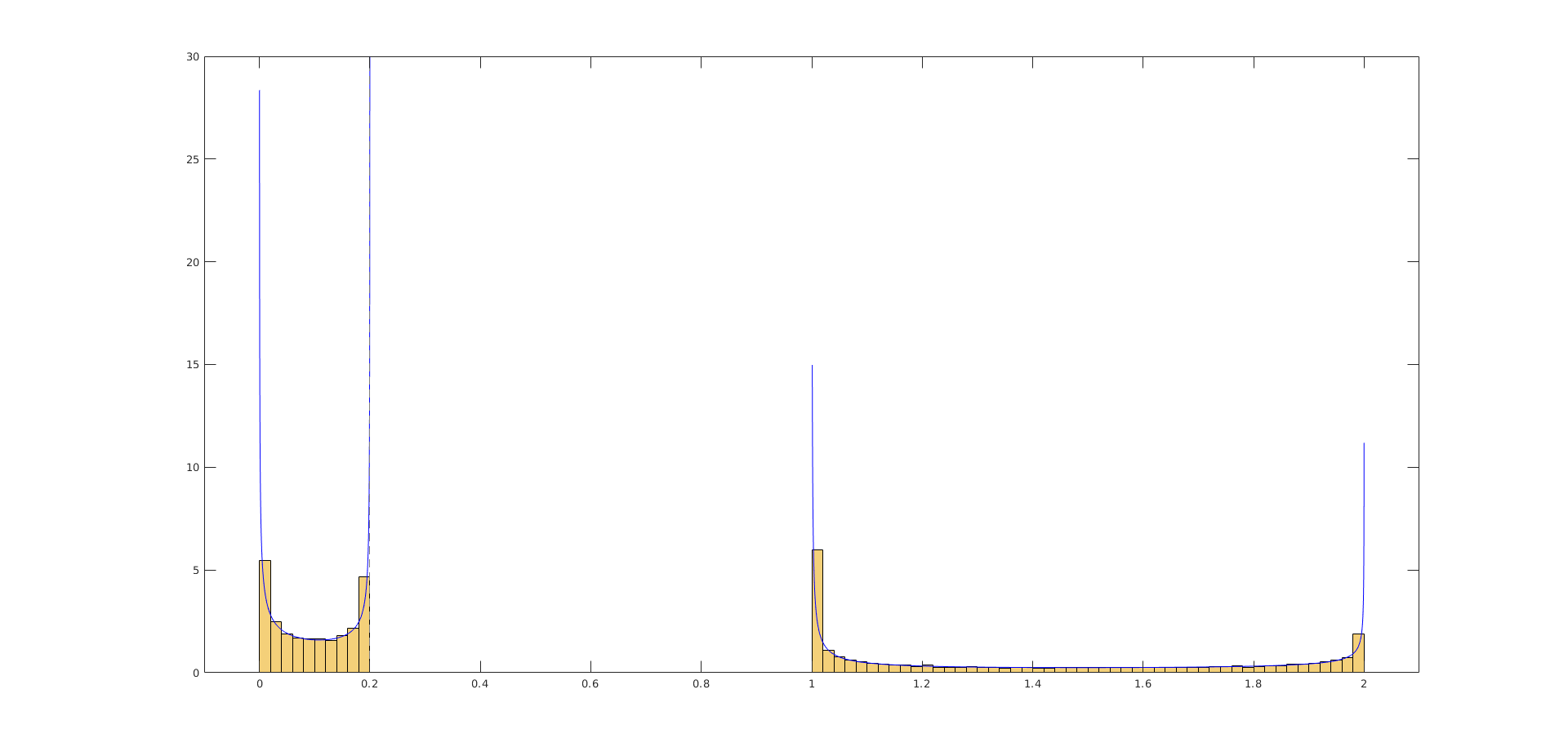}}
\caption{Histogram of eigenvalues of $P+QPQ$ when $P, Q$ are projections onto independent Haar distributed subspaces in $\gG_{2n,n}$ for $n=1000$, together with the limit distribution.}
\label{figure1}
\end{figure}
\end{example}

\medskip

More generally, our method applies to describe the distribution of a polynomial in two free elements whose distributions are supported on two points, since they are affine image of projections. Extending the method to distributions supported on three points seems out of reach.

\section*{Acknowledgements}
We thank the authors of \cite{FMNS} for fruitful discussions. The author was supported in part by ANR (France) under the grant ESQuisses (ANR-20-CE47-0014-01)

\bibliography{main}{}
\bibliographystyle{plain}

\end{document}